\documentclass[final]{amsart}
\usepackage{microtype}
\usepackage[foot]{amsaddr}

\usepackage[utf8]{inputenc}
\usepackage{amsmath}
\usepackage{amsfonts}
\usepackage{amssymb}
\usepackage{amsthm}
\usepackage{graphicx}
\usepackage{color}
\usepackage[colorlinks= false, urlcolor=black, citecolor=black, breaklinks=true]{hyperref}
\usepackage{tikz}
\usetikzlibrary{positioning}
\usetikzlibrary{calc}
\usepackage{bm}
\usepackage{isomath}
\usepackage{framed}
\usepackage{algorithm}
\usepackage{algpseudocode}
\usepackage{cite}

%%%%%%%%%%%%%%%%%%%%%%%%%%%%%%%%%%%%%%%%%%%%%%%%%%%%%%%%%%%%%%%%%%%%%%%%%%
\newcommand{\remove}[1] {}

\newtheorem{definition}{Definition}
\newtheorem{lemma}{Lemma}

\newtheorem{theorem}{Theorem}

\newtheorem{claim}{Claim}

\newtheorem*{summary*}{Summary of results}
\newtheorem{example}{Example}

\newenvironment{customthm}[1]
  {\innercustomthm}
  {\endinnercustomthm}

\newcommand{\eat}[1]{}
\newcommand{\m}{\{1,\ldots,m\}}
\newcommand{\e}{\epsilon}
\newcommand{\N}{\mathbb{N}}
\newcommand{\F}{\mathcal{F}}

\newcommand{\E}{\bar{E}}
\newcommand{\X}{\mathcal{X}}
\newcommand{\x}{\chi}
\newcommand{\MAlg}{\textsc{M-Algorithm}}
\newcommand{\MSR}{\textsc{MaxSetRes}}
\newcommand{\Val}{\textsc{ValAlg}}
\newcommand{\MV}{\textsc{MultiSetVal}}
\newcommand{\p}{\bar{p}}

\newcommand{\MSV}{\textsc{MaxSetVal}}
\newcommand{\bt}{\mathbf{t}}

\newcommand{\Q}{\mathbf{Q}}
\newcommand{\R}{\mathbf{R}}
\newcommand{\bx}{\mathbf{x}}
\newcommand{\Btree}{\mathcal{B}}

\newcommand{\bn}{\mathbf{n}}
\newcommand{\1}{\mathbf{1}}
\newcommand{\G}{\Gamma}
\newcommand{\rP}{\rm{P}}
\newcommand{\hP}{\hat{\rm{P}}}
\newcommand{\tP}{\tilde{\rm{P}}}
\newcommand{\sco}{\text{sc}}
\newcommand{\z}{\bar{z}}
\newcommand{\I}{\mathcal{I}}
\newcommand{\bz}{\mathbf{z}}

\usepackage{varwidth}

\title[Directed Lov\'asz Local Lemma and Shearer's Lemma]{Directed Lov\'asz Local Lemma and Shearer's Lemma$^*$\footnote{$^*$T\lowercase{his is a revised version of \cite{kirousis2020directed} with a correct statement and proof of Theorem 1a.}}}

\author[L. Kirousis]{Lefteris Kirousis$^{1,3}$}
\thanks{$^3$Research partially supported  by TIN2017-86727-C2-1-R, GRAMM}

\author[J. Livieratos]{John Livieratos$^1$}

\author[K. I. Psaromiligkos]{Kostas I. Psaromiligkos$^{2,4}$}
\thanks{$^4$Research carried out while  an undergraduate student at the Department of Mathematics of the National and Kapodistrian University of Athens.}

\address{$^1$Department of Mathematics, National and Kapodistrian University of Athens}
\address{$^2$Department of Mathematics, University of Chicago. Onassis Scholar}

\email{\{lkirousis,jlivier89\}@math.uoa.gr, kostaspsa@math.uchicago.edu}

\begin{document}
\begin{abstract}
Moser and Tardos (2010) gave an algorithmic proof of the lopsided Lov\'asz local lemma (LLL) in the variable framework, where each of the undesirable events is assumed to depend on a subset of a collection of independent random variables. For the proof, they define a notion of a lopsided dependency  between the events suitable for this framework. In this work, we strengthen this notion, defining a novel \emph{directed} notion of dependency and prove the LLL for the corresponding graph. We   show that this graph can be strictly sparser (thus the sufficient condition for the LLL weaker) compared with graphs that correspond to  other extant lopsided versions of dependency. Thus, in a sense,  we address  the  problem ``find other simple local conditions for the constraints (in the variable framework) that advantageously translate to some abstract lopsided condition" posed  by Szegedy (2013). We also give an example where   our notion of dependency graph  gives  better results than the classical Shearer lemma. Finally, we prove  Shearer's lemma for the dependency  graph we define. For the  proofs, we perform a direct   probabilistic analysis that yields an exponentially small upper bound for  the probability  of the algorithm that searches for the desired assignment to the variables not to return a correct answer within $n$ steps. In contrast, the method of proof that became known as the entropic method, gives an estimate of only the expectation of the number of steps until the algorithm returns a correct answer, unless the probabilities are tinkered with. 
\end{abstract}

\maketitle

\section{Inroduction}\label{sec:intro}
The Lov\'asz Local Lemma (LLL) was originally stated and proved in 1975 by Erd\H{o}s and Lov\'asz \cite{erdoslovasz1975}. Its original {\em symmetric} form states that given events $E_1, \ldots, E_m$ in a common probability space, if every event depends  on at most $d$ others, and if the probabilities of all are bounded by $1/(4d)$, then $$Pr\Bigg[\bigwedge_{j=1}^m \bar{E}_j\Bigg] >0,$$ and therefore there exists at least one point in the space where none of the events occurs ($\bar{E}$ denotes the complement of $E$).

The {\em asymmetric} version entails  an undirected {\em dependency graph}, i.e. a graph with vertices $j=1, \ldots, m$ corresponding to the events $E_1, \ldots, E_m$  so that for all $j$, $E_j$ is mutually independent from the set of  events corresponding to vertices  not connected with $j$. The condition that in this case guarantees  that $Pr[\bigwedge_{j=1}^m \bar{E}_j] >0$ (and therefore that there exists at least one point where none of the events occurs) is:

\begin{equation}\label{eq:asymmetric}\tag{\sf{Asym}}\text{for every }E_j\text{ there is a }\x_j\in(0,1)\text{ such that }Pr[E_j]\leq\x_j\prod_{i\in N_j}(1-\x_i),\end{equation}
 where $N_j$ is the neighborhood of vertex $j$ in the dependency graph.

Improvements can be obtained by considering, possibly directed, sparser graphs than the dependency graph, that correspond to stronger  notions of dependency. For a classic example, the {\em lopsided }version (LLLL) by Erd\H{o}s and Spencer~\cite{erdosspencer1991} entails a {\em directed} graph with vertices corresponding to the events  such that for all $E_j$ and for all $I\subseteq\{1,\ldots,m\}\setminus(\G_j\cup\{j\})$ we have that \begin{equation}\label{eq:lopsidependency}\Pr\Bigg[E_j \mid \bigcap_{i \in I} \bar{E}_i\Bigg] \leq \Pr\left[E_j\right],\end{equation}
where $\G_j$ is the set of vertices connected with $j$ with an edge originating from $j$. Such graphs are known as {\em lopsidependency graphs}. The sufficient condition in this case that guarantees that the undesirable events can be avoided reads: 
\begin{equation}\label{eq:LLLL}\tag{\sf{Lop}}\text{for every }E_j\text{ there is a }\x_j\in(0,1)\text{ such that }Pr[E_j]\leq\x_j\prod_{i\in \G_j}(1-\x_i).\end{equation}
With respect to ordinary (not lopsided) dependency graphs, a sufficient {\em but also necessary} condition to  avoid all events was given by Shearer \cite{shearer1985problem}. It reads:

\begin{equation}\label{eq:shearer}\tag{\sf{Shear}} \text{For all } I\in I(G), \ q_I(G,\p):=\sum_{J\in I(G): I\subseteq J}(-1)^{|J\setminus I|}\prod_{j\in J}p_j>0,\end{equation} 
where $I(G)$ is the set of \emph{independent sets} of $G$ and $\p=(p_1,\ldots,p_m)$ is the vector of probabilities of the events.

By considering other graphs, and the corresponding to them Condition \eqref{eq:shearer}, variants of the Shearer lemma are obtained. These variants  are in  general  only sufficient, however they apply to sparser  dependency  graphs. For example, by proving the sufficiency of    Condition \eqref{eq:shearer} when  applied to  the  lopsidependency graph of Erd\H{o}s and Spencer \cite{erdosspencer1991}, we get  Shearer's lemma for lopsidependency graphs (actually, for  Shearer's lemma in this case it is the underlying undirected graph of  the lopsidependency graph that is considered). 

Apart from existence, research has been focused also in ``efficiently" finding a point in the probability space such that no undesirable event occurs. After several partially successful attempts that expand over more than three decades,  Moser~\cite{moser2009} in 2009, initially only for the symmetric LLL,  gave an extremely simple randomized  algorithm that if and when it stops, it certainly produces a point where all events are avoided. Soon after Moser and Tardos \cite{mosertardos2010} expanded this approach to  more general versions including the lopsided one. The algorithms were given  in the variable framework, where the space is assumed to be  the product space  of independent random variables $X_1, \ldots, X_l$,  and each event is assumed to depend on a subset of them, called its {\em scope}. Their algorithm just samples iteratively the variables of occurring events, until all the events cease to occur.  For the analysis, they estimate the \emph{expectation} of the number of times each event will be resampled in a given execution of the algorithm by counting ``tree-like'' structures they call \emph{witness trees} and by estimating the probability that such a tree occurs in the log of the algorithm's execution. This approach became known as the \emph{``entropy compression''} method (see \cite{tao2009} for a short  exposition).
For the proof of the lopsided LLL, Moser and Tardos \cite{mosertardos2010} defined an {\em undirected} lopsidependency graph suitable for the variable framework. 

 In the variable framework, Harris \cite{harris2015lopsidependency} gave a weaker version for the \ref{eq:LLLL} condition, entailing the notion of \emph{orderability}, which takes advantage of the way events are related based on the different values of the variables they depend on. He works with  the Moser-Tardos notion of lopsidependency graph and he proves that  his weaker sufficient condition  can yield  stronger results than the classical Shearer's lemma.  Also, recently He et al. \cite{he2017variable} gave a necessary and sufficient condition for LLL in the variable framework, but for the dependency graph where two events are connected if their scopes share at least one variable. We focus, on the contrary, in working with sparser graphs.

There are numerous applications of the algorithmic versions of both LLL and its lopsided version, even for problems that do not originate from purely combinatorial issues. For example, for the non-lopsided versions let us mention the problem of  \emph{covering arrays}, a problem closely related with software and hardware interaction testing. The objective is to find the \emph{minimum} number $N$, expressed as a function $k,t, v$, such that there exists an  $N\times k$ array $A$, with elements taken from a set $\Sigma$ of cardinality $v\geq 2$, so that every $N\times t$ sub-array of $A$   contains  as one of its rows every  element $x\in\Sigma^t$. Sarkar and Colbourn~\cite{sarkar2017upper} improve on known upper bounds for $N$, by using LLL. Notably,  they also provide an algorithm that constructs an $N\times k$ array with the above properties, by using a variant of the Moser-Tardos algorithm~\cite{mosertardos2010}.
As for the lopsided version let us mention, e.g., the work of  Harris and Srinivasan~\cite{harris2014constructive} who apply it  in the setting of \emph{permutations}, where the undesirable events are defined over permutations $\pi_k$ of $\{1,\ldots,n_k\}$, $k=1,\ldots,N$. 

The lopsided LLL was generalized to the   framework of arbitrary probability spaces  by Harvey and Vondr\'{a}k \cite{harvey2015algorithmic}, by means of a machinery that they called ``resampling oracles". They introduced, in the framework of arbitrary probability spaces, {\em directed} lopsidependency graphs they called {\em lopsided association} graphs. They proved that a graph is a lopsided association graph if and only if it is a graph along the edges of which  resampling oracles can be applied. In the generalized framework and based on their lopsided association condition, they algorithmically proved LLLL. In the same framework, they also proved Shearer's lemma (in this respect, see also the work of Kolipaka and Szegedy \cite{kolipakaszegedy2011}). Finally, Achlioptas and Iliopoulos \cite{achlioptas2016random}  have introduced a powerful abstraction for algorithmic LLL, which is inherently directed and they prove the lopsided LLL in this framework. 

Let us mention here that Szegedy \cite{szegedy2013} gives   a comprehensive survey of the LLL, that contains many of  the algorithmic results.

\paragraph{Our results.} We work exclusively in the variable framework. First, we define a novel relation of \emph{directed} dependen\-cy that we call $d$-\emph{de\-pend\-ency}, which is stronger than that of Moser and Tardos \cite{mosertardos2010}. We also show  that this relation  may generate  a strictly sparser dependency graph than other extant ones (and so it leads to weaker sufficient conditions for LLL).

We then algorithmically prove that the {\sf Lop} condition  suffices to avoid all events when applied to the graph defined by our notion of $d$-dependency. Thus, in a sense we address  the  problem ``find other simple local conditions for the constraints (in the variable framework) that advantageously translate to some abstract lopsided condition" posed  by Szegedy \cite{szegedy2013}. 

Our approach is based on Moser's original algorithm \cite{moser2009}, which, upon resampling an event, checks its neighborhood for other occurring events. Like in Giotis et al. \cite{giotiskirousis2015},   we use a witness structure (forest) to depict the execution of our algorithms that, in contrast with those of the ``Moser-Tardos-like'' proofs, grows ``forward in time'', meaning that it is  constructed as an execution moves on.  Taking advantage of this structure, we express the probability that the algorithm executes for at least $n$ rounds by a \emph{recurrence relation}. We subsequently solve this recurrence  by specialized analytical means, and prove that it diminishes exponentially fast in $n$. Specifically, we employ the result of Bender and Richmond \cite{bender1998multivariate} on the multivariable Lagrange inversion formula. A positive aspect of this approach is that it provides an exponentially small bound for  the probability of the algorithm to last for at least $n$ steps (including to run intermittently) before it returns the desired result,  in contrast to the entropic method  that estimates the expected time  of the algorithm to return a correct answer. We also note that, in contrast to  Harvey and Vondr\'{a}k \cite{harvey2015algorithmic}, our proof for the directed  LLL is independent of the one for Shearer's lemma. 

Finally, although we show that our notion of dependency can give stronger results than the classical Shearer's lemma,   we use our forward approach  to prove this lemma for the  $d$-dependency graph. 
 An algorithmic proof for Shearer's lemma for the ordinary dependency graph was first provided by Kolipaka and Szegedy \cite{kolipakaszegedy2011}, who actually gave a proof for the general case of arbitrary  probability spaces. The latter result was   strengthened   by Harvey and Vondr\'{a}k \cite{harvey2015algorithmic} for their notion of association graphs, again for general probability spaces.   Our result is for  the variable framework, but for the possibly sparser graph of $d$-dependency. Also,    we give again a direct computation of an exponentially small upper bound to the probability of the algorithm to last for at least $n$ steps. 
To carry out the computations in our forward approach,   we employ Gelfand's formula for the spectral radius of a matrix (see \cite{horn1990matrix}). 

Note that, as is the case with all extant algorithmic approaches to the LLL, both the number of events $m$ and the number of random variables $l$ are assumed to be constants. Complexity considerations are made with respect to the number of steps the algorithms last. 

\section{$d$-Dependency and a weak version of first result} \label{sec:d-depend}
For everything that follows, we assume that $Pr[E_j]<1$, $j=1,...,m$, lest there is no way to avoid all the events.

We begin by defining the following \emph{asymmetric} relation between two events.  

\begin{definition} \label{def:dd}
Given events $E_i, E_j$, we say that $E_i$ is \emph{$d$-dependent} on $E_j$ if:
\begin{enumerate}
\item there exists an assignment $\alpha$ to the random variables under which $E_j$ occurs  and $E_i$  does not, and
\item the values of the variables in sc($E_j$), the scope of $E_j$,  can be changed so that $E_i$ occurs and $E_j$ ceases occurring.
\end{enumerate}
\end{definition}
Intuitively, $E_i$ is $d$-dependent on $E_j$ if it is possible that some \emph{successful} attempt to avoid the occurrence of $E_j$ may end up with $E_i$ occurring, although initially it did not.

The binary relation of $d$-dependency defines a simple (no loops or multiple edges) \emph{directed} graph $G=(V,E)$, the \emph{$d$-dependency graph} of events $E_1,...,E_m$, where $V=\{1,...,m\}$ and
$E=\{(j,i) \mid E_i \mbox{ is  $d$-dependent on } E_j\}.$ Trivially, this graph is \emph{sparser} than the usual dependency graph in the variable framework, where there is an edge between events with \emph{intersecting scopes}.

For $i=1,...,m$, let $\G_j$ be the outwards neighborhood of the event $E_j$ in the $d$-dependency graph, i.e. $\G_j=\{i \mid E_i \mbox{ is $d$-dependent on } E_j\}.$
The notion of $d$-dependency was inspired by the following {\em symmetric} relation of Moser and Tardos \cite{mosertardos2010}, which will sometimes be referred to as {\em MT-dependency}:

\begin{definition}[Moser and Tardos \cite{mosertardos2010}] \label{def:lops}
Let $E_i, E_j$ be events, $i,j \in \{1,...,m\}$. We say that $E_i,E_j$ are lopsidependent if there exist two assignments $\alpha,\beta$, that differ only on variables in $\text{sc}(E_i)\cap \text{sc}(E_j)$, such that:
\begin{enumerate}
\item $\alpha$ makes $E_i$ occur and $\beta$ makes $E_j$ occur and
\item either $\E_i$ occurs under $\beta$ or $\E_j$ occurs under $\alpha$.
\end{enumerate}
\end{definition}
Moser and Tardos \cite{mosertardos2010} gave an algorithmic proof that if the condition \eqref{eq:asymmetric}  holds for an (undirected) dependency graph with respect to the notion of Definition \ref{def:lops}, then the undesirable events can be avoided.

The following claim is straightforward:
\begin{claim}\label{claim:compare}
If $E_i$ is $d$-dependent on $E_j$, in the sense of Definition \ref{def:dd}, then $E_i$ and $E_j$
 are MT-dependent, in the sense of Definition \ref{def:lops}.  
\end{claim}
\begin{proof}
Suppose that under $\alpha=(a_1,\ldots,a_l)$, $\E_i$ and $E_j$ occur and that we can change the values of the variables in sc($E_j$) to get and assignment $\beta=(b_1,\ldots,b_l)$ under which $E_i$ and $\E_j$ occur. Let now $\gamma=(c_1,\ldots,c_l)$ be such that:
\begin{equation}
    c_i=\begin{cases}
    b_i, & \text{if } X_i\in\text{sc}(E_i)\cap\text{sc}(E_j) \\
         a_i, & \text{else,}
    \end{cases}
\end{equation}
for $i=1,\ldots,l$.

Since $E_i$ is not affected by changes in $\text{sc}(E_j)\setminus\text{sc}(E_i)$, $E_i$ occurs under $\gamma$ and thus $E_i$, $E_j$ are lopsidependent.
\end{proof}
A weak version  of our result (given for comparison with other extant ones) reads: 

\begin{theorem}[Directed Lov{\'a}sz local lemma]\label{thm:dalll}
Suppose that there exist numbers $\x_1,\x_2,\ldots,\x_m\in(0,1)$, such that \begin{equation}\label{dirlop}\tag{\sf{DirLop}}\Pr(E_j)\leq \x_j \prod_{i\in \G_j} (1-\x_i),\end{equation}
for all $j\in\{1,\ldots,m\}$, where $\G_j$ denotes the neighborhood of $E_j$ in the $d$-dependency graph. Then, $$\Pr\Bigg[\bigwedge_{j=1}^m\bar{E}_j\Bigg]>0.$$
\end{theorem}
%Note that  \eqref{dirlop} differs from \eqref{eq:asymmetric}  and \eqref{eq:LLLL} only with respect to the graphs they are applied. 

Actually, we prove below  an algorithmic version  (Theorem \ref{thm:algalll}) of the existential Theorem~\ref{thm:dalll}, where we  give exponentially small estimates of the probability of the algorithm not producing the desired results within $n$ steps.

In the following example, we show that the $d$-dependency graph can be strictly sparser than other  dependency graphs that have been used in the literature.

\begin{example}\label{ex:compare}
Suppose we have $n\geq 3$ independent \emph{Bernoulli trials} $X_1,X_2, \ldots, X_n$, where $X_i=1$ denotes the event that the $i$-th such trial is \emph{successful}, $i=1,\ldots,n$. Consider also the $n$ ``undesirable events'':
$$E_j=\{X_j=1 \vee X_{j+1}=1\},$$ where $X_{n+1}=X_1$ and assume also that each of the Bernoulli trials succeeds with probability $p\in[0,1)$. Thus: $$Pr[E_j]=p+(1-p)p=2p-p^2.$$ We begin by showing that for any two \emph{distinct} $E_i$, $E_j$, neither one of them is $d$-dependent on the other. Without loss of generality, let $i=1$ and $j=2$.

Since $\text{sc}(E_1)\cup\text{sc}(E_2)=\{X_1,X_2,X_3\}$, both $E_1$ and $E_2$ are affected only by the first three coordinates of an assignment of values. We will thus restrict the assignmets to those coordinates.

Suppose $E_1$ and $\E_2$ occur under an assignment $\alpha$. Then, $\alpha=(1,0,0)$ and there is no way to change the first two coordinates in order for $\E_1$ and $E_2$ to occur. Thus $E_2$ is not $d$-dependent on $E_1$. Furthermore, for $\E_1$ and $E_2$ to occur under an assignment $\beta$, $\beta=(0,0,1)$ and there is no way to change the last two coordinates of $\beta$ in order for $E_1$ and $\E_2$ to occur. Thus $E_1$ is not $d$-dependent on $E_2$.

We can analogously prove the same things for all pairs of $E_j,E_{j+1}$, $j=1,\ldots,n$, where $E_{n+1}:=E_1$. Furthermore, it is easy to see that for any $i,j\in\{1,\ldots,n\}$: $i<j$ and $j\neq i+1$, neither $E_j$ is $d$-dependent on $E_i$ nor vice versa, since $E_i,E_j$ have no common variables they depend on. Thus, the $d$-dependency graph of the events has no edges and it is trivial to observe that we can avoid all the events if and only if $p<1$.

On the other hand, consider assignments $\gamma=(1,0,0)$ and $\delta=(1,1,0)$. Under $\gamma$, $E_1,\E_2$ occur, under $\delta$ $E_2$ occurs and the assignments differ only on $X_2\in\text{sc}(E_1)\cap\text{sc}(E_2)$. By Definition \ref{def:lops}, $E_1$ and $E_2$ are lopsidependent.

Given the above, it is not difficult to see that the underlying undirected graph of the  lopsidependency graph defined by the dependency relation of Definition \ref{def:lops}, is the cycle $C_n$ on $n$ vertices.

Interestingly, by interpreting Harvey and Vondr\'{a}k's \cite{harvey2015algorithmic} definition of resampling oracles in the variable setting as the resampling of the variables in the scope of an event, we get a \emph{directed graph}, whose underlying graph  is again $C_n$.  The same is true for the directed ``potential causality graph'' of Achliptas and Illiopoulos~\cite{achlioptas2016random}, where the flaws correspond to events and where we interpret an arc $f\rightarrow g$ between flaws $f,g$, again in the variable framework, as being able to obtain flaw $g$ by resampling the variables in the scope of flaw $f$.

In the sequel, we assume $n=3$ in order to simplify the example. The corresponding graphs are given in Figures \ref{fig:dd}--\ref{fig:hvai}.\newpage 

\begin{figure}
\begin{center}
		\begin{tikzpicture}[node/.style={circle,draw=black!100,fill=white!20,minimum size=15pt,inner sep=0pt},nonode/.style={circle}]
		\node[node] (d3) {$3$};
		\node[nonode] (d0) [above= of d3] {};
		\node[node] (d1) [left=0.5cm of d0] {$1$};
		\node[node] (d2) [right=0.5cm of d0] {$2$};
		\end{tikzpicture}
		\vspace{0.2cm}
\caption{$d$-dependency graph, Definition~\ref{def:dd}}
\label{fig:dd}
		\end{center}
\end{figure}
\begin{figure}
   \begin{center}
		\begin{tikzpicture}[node/.style={circle,draw=black!100,fill=white!20,minimum size=15pt,inner sep=0pt},nonode/.style={circle}]
		\node[node] (d3) {$3$};
		\node[nonode] (d0) [above= of d3] {};
		\node[node] (d1) [left=0.5cm of d0] {$1$};
		\node[node] (d2) [right=0.5cm of d0] {$2$};
		\draw (d1) -- (d2);
		\draw (d2) -- (d3);
		\draw (d3) -- (d1);
		\end{tikzpicture}
		\vspace{0.2cm}
    \caption{MT-dependency graph, Definition \ref{def:lops}}
    \label{fig:lops}
    \end{center}
\end{figure}
\begin{figure}
\begin{center}
		\begin{tikzpicture}[node/.style={circle,draw=black!100,fill=white!20,minimum size=15pt,inner sep=0pt},nonode/.style={circle}]
		\node[node] (d3) {$3$};
		\node[nonode] (d0) [above= of d3] {};
		\node[node] (d1) [left=0.5cm of d0] {$1$};
		\node[node] (d2) [right=0.5cm of d0] {$2$};
		
		  \draw[-latex] (d1) edge[bend left=10] (d2);
		    \draw[-latex] (d2) edge[bend left=10] (d1);
		    \draw[-latex] (d1) edge[bend left=10] (d3);
		    \draw[-latex] (d3) edge[bend left=10] (d1);
		    \draw[-latex] (d2) edge[bend left=10] (d3);
		    \draw[-latex] (d3) edge[bend left=10] (d2);
		
		\end{tikzpicture}
		\vspace{0.2cm}
    \caption{Lopsided association~\cite{harvey2015algorithmic}, and potential causality graph~\cite{achlioptas2014random}, when interpreted in the variable framework in the natural way.}
    \label{fig:hvai}
\end{center}\end{figure}

Now, it is not difficult to see that the \eqref{eq:asymmetric} condition applied to the graph corresponding to the dependency of Definition \ref{def:lops} requires for $\x_1,\x_2,\x_3\in(0,1)$ such that:
\begin{align*}
Pr[E_1] \leq & \x_1(1-\x_2)(1-\x_3), \\
Pr[E_2] \leq & \x_2(1-\x_1)(1-\x_3), \\
Pr[E_3] \leq & \x_3(1-\x_1)(1-\x_2). \\
\end{align*}
Thus, for the Moser and Tardos  lopsided  LLL to apply, it must hold that:$$2p-p^2\leq \x(1-\x)^2,$$ where $\x=\min\{\x_1,\x_2,\x_3\}$. This is maximized for $$\x=\frac{2^2}{3^3}=\frac{4}{27},$$ thus $p$ must be \emph{at most} $0.077$ (recall that our notion only requires for $p$ to be strictly less than one).

Finally, taking the dependency graph of $E_1$, $E_2$ and $E_3$, where two vertices are connected if their corresponding events' scopes intersect, we get as dependency graph  the cycle $C_3$ and henceforth by simple calculations, classical  Shearer's lemma requires that: $$1-3(2p-p^2)>0\Leftrightarrow p<0.184,$$ a stronger requirement  than the one that suffices to show that the undesirable events can be avoided through  our $d$-dependency notion. However, our version of Shearer's lemma (see Section \ref{sec:shearer}) gives  $p<1$.

Let us note that if we consider the classical definition of a lopsidependency graph  by Erd\H{o}s and Spencer \cite{erdosspencer1991}, namely a directed one satisfying inequality~\eqref{eq:lopsidependency}, then the graph turns out to be empty as well, since no event has a negative effect on any other, neither the union of any two does on the third. \hfill$\diamond$
\end{example}

We now show that the $d$-dependency graph is an Erd\H{o}s-Spencer lopsidependency graph.
\begin{lemma} \label{lem:genlops}
For any event $E_j$, $j=1,...,m$, let $I$ be a set of indices of events not in $\Gamma_j\cup \{j\}$. Then, it holds that:
$$\Pr\Big[E_j \ | \ \bigcap_{i \in I} \overline{E_i}\Big]\leq \Pr[E_j].$$ 
\end{lemma}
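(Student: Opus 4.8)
The plan is to prove the equivalent assertion that $E_j$ is non-positively correlated with $B:=\bigcap_{i\in I}\overline{E_i}$, i.e. $\Pr[E_j\cap B]\le\Pr[E_j]\cdot\Pr[B]$ (the degenerate cases $\Pr[B]=0$ and $\Pr[E_j]=0$ being trivial). Set $S:=\mathrm{sc}(E_j)$ and, for each assignment $\tau$ to the variables \emph{not} in $S$, let $F_\tau\subseteq\Omega$ be the \emph{fiber} of all $\alpha\in\Omega$ that agree with $\tau$ off $S$. Since $E_j$ depends only on the variables in $S$ and all the $X_i$ are mutually independent, $\Pr[E_j\mid X_{\bar S}=\tau]=\Pr[E_j]$ for every $\tau$; hence the whole question reduces to understanding how $B$ meets each fiber.

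The combinatorial heart of the proof is the claim: \emph{for every $i\in I$ and every $\tau$, if $E_i\cap F_\tau\neq\emptyset$ then $E_j\cap F_\tau\subseteq E_i$.} Indeed, suppose not; then there exist $\alpha\in E_j\cap F_\tau$ with $\alpha\notin E_i$ and some $\beta\in E_i\cap F_\tau$. Since $\alpha$ and $\beta$ lie in the same fiber they agree on every variable outside $S=\mathrm{sc}(E_j)$, so $\beta$ differs from $\alpha$ only on variables of $\mathrm{sc}(E_j)$. Then $\alpha$ --- under which $E_j$ and $\overline{E_i}$ occur --- and $\beta$ --- under which $E_i$ occurs --- are precisely the two assignments Definition~\ref{def:vdl} asks for in order to certify that $E_i$ is VDL on $E_j$; thus $E_i\in\Gamma(E_j)$, contradicting $i\in I$.

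Call $\tau$ \emph{good} if $E_i\cap F_\tau=\emptyset$ for all $i\in I$. If $\tau$ is good then $F_\tau\subseteq\overline{E_i}$ for each $i\in I$, so $F_\tau\subseteq B$ and $\Pr[E_j\cap B\mid X_{\bar S}=\tau]=\Pr[E_j\mid X_{\bar S}=\tau]=\Pr[E_j]$. If $\tau$ is not good, choose $i_0\in I$ with $E_{i_0}\cap F_\tau\neq\emptyset$; the claim gives $E_j\cap F_\tau\subseteq E_{i_0}$, so $E_j\cap\overline{E_{i_0}}\cap F_\tau=\emptyset$ and hence $E_j\cap B\cap F_\tau=\emptyset$, giving $\Pr[E_j\cap B\mid X_{\bar S}=\tau]=0$. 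Summing over $\tau$ via the law of total probability, $\Pr[E_j\cap B]=\Pr[E_j]\cdot\Pr[X_{\bar S}\text{ is good}]$, whereas each good $\tau$ contributes its entire fiber to $B$, so $\Pr[B]\ge\Pr[X_{\bar S}\text{ is good}]$; combining these and dividing by $\Pr[B]$ gives $\Pr[E_j\mid B]\le\Pr[E_j]$. The one delicate step is the combinatorial claim, and what must be got right there is the choice of decomposition: one has to slice the probability space along the complement of $\mathrm{sc}(E_j)$ \emph{exactly} --- not along scope intersections, as for the symmetric notion of Definition~\ref{def:lops} --- so that any two points of a common fiber automatically differ only within $\mathrm{sc}(E_j)$ and therefore match the \emph{asymmetric} shape of the VDL condition; everything else is routine conditioning together with the independence of the $X_i$.
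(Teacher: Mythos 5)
Your proof is correct, but it takes a genuinely different route from the paper's. You partition $\Omega$ into fibers determined by the values of the variables outside $\mathrm{sc}(E_j)$, note that your combinatorial claim is exactly the contrapositive of Definition~\ref{def:vdl} (two points of a common fiber differ only inside $\mathrm{sc}(E_j)$, so a witness pair $(\alpha,\beta)$ would put $E_i$ in $\Gamma(E_j)$), and conclude that each fiber is either wholly contained in $B=\bigcap_{i\in I}\overline{E_i}$ or disjoint from $E_j\cap B$; independence then gives the factorization $\Pr[E_j\cap B]=\Pr[E_j]\cdot\Pr[G]$, where $G\subseteq B$ is the union of good fibers, and the inequality follows. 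The paper argues differently: it sets $E_k=\bigcap_{i\in I}\overline{E_i}$, draws two independent assignments $\alpha,\beta$, swaps their coordinates on $\mathrm{sc}(E_k)\setminus\mathrm{sc}(E_j)$ to obtain $\alpha',\beta'$, observes that the swap preserves the product distribution, and shows that ``$\alpha$ makes $E_j\cap E_k$ occur'' implies ``$\alpha'$ makes $E_j$ occur and $\beta'$ makes $E_k$ occur'', whence $\Pr[E_j\cap E_k]\le\Pr[E_j]\Pr[E_k]$; the VDL hypothesis enters there in the same role as in your fiber claim, to guarantee $E_k$ still holds after the perturbation within $\mathrm{sc}(E_j)$. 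Your conditioning argument is more elementary (no second sample, no coupling) and yields slightly more, namely an exact identity rather than just the inequality; the paper's swapping argument is the classical device for lopsided-LLL correlation inequalities and avoids any explicit decomposition of the space. The small technicalities you wave off --- conditioning only on fibers of positive probability and the degenerate case $\Pr[B]=0$ --- are indeed harmless in this finite setting.
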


\begin{proof}
Let $E=\bigcap_{i \in I} \overline{E_i}$. Note that $E$ is not necessarily some of the $E_j$s (in fact if it is, then there is no assignment that avoids all the undesirable events). Now, in order to obtain a contradiction, suppose that:$$\Pr[E_j\mid E]>\Pr[E_j]$$ or, equivalently, that:$$\Pr[E_j\cap E]>\Pr[E_j]\cdot\Pr[E].$$ Then, it holds that:\begin{multline}\Pr[\E_j\cap E]=\Pr[E]-\Pr[E_j\cap E] <  \Pr[E]-\Pr[E_j]\cdot\Pr[E]=\\ \Pr[E](1-\Pr[E_j])=  \Pr[\E_j]\cdot\Pr[E].\label{ineq}
\end{multline}

Since $i \notin \Gamma_j$, for all $i\in I$, it holds that for any assignment $\alpha$ that makes $E_j$ and $E$ hold, there is no assignment $\beta$ that differs from $\alpha$ only in sc($E_j$) that makes $\E$ hold.

To obtain a contradiction, it suffices to show that:
\begin{equation*}
\Pr[E_j \ | \ E] \leq \Pr[E_j] \Leftrightarrow
\Pr[E_j \cap E] \leq \Pr[E_j]\cdot \Pr[E].
\end{equation*}
Suppose now $\alpha=(a_1,...,a_l),\beta=(b_1,...,b_l)$ are two assignments obtained by independently sampling the random variables twice, once to get $\alpha$ and once to get $\beta$.  It holds that:
$$\Pr[\underbrace{(\alpha, \beta): \text{ under } \alpha, \ E_j\cap E \text{ occurs and under }\beta, \ \E_j\text{occurs}}_{\text{event } S}]=\Pr[E_j\cap E]\cdot\Pr[\E_j].$$

Let now $\alpha'=(a'_1,...,a'_l)$, $\beta'=(b'_1,...,b'_l)$ be two assignments obtained by $\alpha$, $\beta$ by swapping values in variables in $\text{sc}(E_j)$:
\begin{itemize}
\item $a'_i=b_i$, for all $i$ such that $X_i \in \text{sc}(E_j)$, $a'_i=a_i$ for the rest,
\item $b'_i=a_i$, for all $i$ such that $X_i \in \text{sc}(E_j)$ and $b'_i=b_i$ for the rest.
\end{itemize}

Obviously $\alpha',\beta'$ are two independent samplings of all variables, since all individual variables were originally sampled independently, and we only changed the positioning of the individual variables. Also, under $\alpha'$, $\E_j$ occurs. Since none of the $E_i$'s is $d$-dependent on $E_j$, $E$ occurs under $\alpha'$. Also, under $b'$, $E_j$ occurs. Thus, it holds that:
\begin{multline*}\Pr[\underbrace{\text{under }\alpha', \ \E_j\cap E\text{ occurs and under } \beta', \ E_j \text{ occurs}}_{\text{event } T}]=\Pr[\E_j\cap E]\cdot\Pr[E_j]< \\ \Pr[\E_j]\cdot\Pr[E]\cdot\Pr[E_j],\end{multline*} where the last inequality holds by \eqref{ineq}.
Now, by the hypothesis and the construction of $\alpha'$, $\beta'$, it also holds that $S$ implies $T$. Thus:
\begin{align*}\Pr[S]\leq \Pr[T] \Leftrightarrow & \Pr[E_j \cap E]\cdot\Pr[\E_j]\leq \Pr[\E_j]\cdot\Pr[E]\cdot\Pr[E_j]\\
\Leftrightarrow & \Pr[E_j\cap E]\leq\Pr[E_j]\cdot\Pr[E].\end{align*}
The last inequality provides the contradiction and the proof is complete.
\qed\end{proof}

As is the case with all (lopsi-)dependency graphs defined based on notions in the variable framework, the dependency graph defined based on mutual independence of the events can be sparser than the $d$-dependency graph. Consequently, the same holds with the Erd\H{o}s-Spencer lopsidependency graph too. We end this section with a simple example that attests to that.
\begin{example}\label{sparser}
Let $X_1$ and $X_2$ be two independent random variables taking values, uniformly at random, in $\{0,1\}$.
Let also $E_1=\{X_1\neq X_2\}$ and $E_2=\{X_2=0\}$.

First, observe that $E_1$ is $d$-dependent on $E_2$. Indeed, let $\alpha=(0,0)$. Under $\alpha$, $E_1$ does not occur and $E_2$ does. By changing the value of $X_2\in\sco(E_2)$, we obtain the assignment $\beta=(0,1)$, under which $E_1$ occurs and $E_2$ doesn't. That $E_2$ is $d$-dependent on $E_1$ follows by taking the assignment $\beta$ and changing the value of $X_2\in\sco(E_1)$ to obtain $\alpha$.

On the other hand, notice that:$$\Pr[E_1]=\frac{1}{2}=\Pr[E_1\mid\E_2]$$ and that:$$\Pr[E_2]=\frac{1}{2}=\Pr[E_2\mid\E_1].$$ Thus $E_1$ and $E_2$ are independent.\hfill$\diamond$
\end{example}

\section{The lopsidependent case}\label{sec:lopsid}
Both approaches by   Moser \cite{moser2009} and by  Moser and Tardos \cite{mosertardos2010} search for an assignment that avoids the undesirable events by consecutively resampling the variables in the scopes of currently occurring events. In the approach of Moser \cite{moser2009}, when choosing the next event whose variables will be resampled, priority is given to the occurring events that belong to the \emph{extended neighborhood} in the dependency graph of the last resampled event (the extended neighborhood of an event $E$ is by definition the set of events sharing a variable  with $E$, with the event $E$ itself included). Thus the failure of the algorithm to return a correct answer within $n$ steps is depicted by a structure, called the witness forest of the algorithm's execution (will be formally defined below). So, in some sense, this approach guarantees that failure to produce results, will create, step after random step,  a structure out of randomness, something that cannot last for long, lest the second principle of thermodynamics is violated. This is, very roughly,  the intuition behind the entropic method. However, as we stressed, we analyze the algorithm by direct computations instead of referring to entropy. One key idea throughout this work is to give absolute priority, when searching  for the next event to be resampled, to the event itself, if it still occurs, in order to be able to utilize the $d$-dependency graph of the events.\remove{One key  idea throughout this work is to give more ``structure" to the witness forest by giving absolute priority, when searching  for the next event to be resampled, to the event itself, if it still occurs. Interestingly, this simple idea allows to consider a sparser dependency graph.} 

To be specific, see the pseudocode of \MAlg, which  successively produces random assignments, by resampling the variables in the scopes of occurring events, until it finds one under which no undesirable event occurs. When the variables in the scope of an occurring event $E_j$ are resampled, the algorithm checks if $E_j$ still occurs (lines \ref{m:if} and \ref{m:recursivecall1} of the \textsc{Resample} routine) and, only  in case it does not, looks for occurring events in $E_j$'s   neighborhood.  Finally, if and when all events in $E_j$'s neighborhood cease  occurring, the algorithm  looks for  still occurring events elsewhere.

\begin{algorithm}[h]\label{al:Malg}
\caption{$\textsc{M-Algorithm.}$}\label{alg:M-Alg}
\vspace{0.1cm}
\begin{algorithmic}[1]
\State Sample the variables $X_i$, $i=1,...,l$ and let $\alpha$ be the resulting assignment.\label{m:sample}
\While{there exists an event that occurs under the current assignment, let $E_j$ be the least indexed such event and}\label{m:while}
  \State{$\textsc{Resample}(E_j)$}\label{m:rootcall}
\EndWhile
\State  Output current assignment $\alpha$.\label{m:output}
\end{algorithmic}
\begin{algorithmic}[1]
\vspace{0.1cm}
\Statex \underline{$\textsc{Resample}(E_j)$}
\vspace{0.1cm}
\State Resample the variables in sc($E_j$).\label{m:resample}
\If{$E_j$ occurs}\label{m:if}
\State $\textsc{Resample}(E_j)$\label{m:recursivecall1}
\Else
\While{some event whose index is in $\G_j$ occurs under the current assignment, \Statex \hspace{1.35cm} let $E_k$ be the least indexed such event and}\label{m:recwhile}
  \State \textsc{Resample}($E_k$)\label{m:recursivecall2}
\EndWhile
\EndIf
\end{algorithmic}
\end{algorithm}
Obviously, if and when \MAlg \ stops, it produces an assignment to the variables for which none of the events occurs. Our aim now is to bound the probability that this algorithm lasts for at least $n$ steps. We count as a step an execution of  the variable resampling command {\sc Resample} in line \ref{m:resample} of the subroutine {\sc Resample}. 

Everywhere below the asymptotics are with respect to $n$, the number of steps, whereas the number $l$ of variables and the number $m$ of events are taken to be constants.

We first give some terminology, and then we start with a lemma that essentially guarantees that \MAlg \ makes progress. 

A \emph{round} is the duration of any \textsc{Resample} call during an execution of \MAlg. Rounds are nested. The number of nested rounds completed coincides with the number  of steps the algorithm takes.  A \textsc{Resample} call made from line \ref{m:rootcall} of the main algorithm is a \emph{root call}, while one made from within another call is a \emph{recursive call}. 

\begin{lemma} \label{lem:mprogr}
Consider an arbitrary call of \textsc{Resample}($E_j$). Let $\X_j$ be the set of events that  do not occur at the start of this call. Then, if and when this call terminates, all events in $\X_j\cup\{E_j\}$ do not occur. \end{lemma}
\begin{proof} 
Without loss of generality, say that \textsc{Resample}($E_j$) is the root call of {\sc Resample}, suppose it terminates and that $\alpha$ is the produced assignment of values. Furthermore, suppose that $E_k\in \X_j\cup\{E_j\}$ and that $E_k$ occurs under $\alpha$.

Let $E_k\in \mathcal{X}_j$. Then, under the assignment at the beginning of the main call, $E_k$ did not occur. Thus, it must be the case that at some point during this call, a resampling of some variables caused $E_k$ to occur. Let $\textsc{Resample}(E_s)$ be the last time $E_k$ became occurring, and thus remained occurring until the end of the main call.

Since $E_k$ did not occur at the beginning of $\textsc{Resample}(E_s)$, there is an assignment of values $\alpha$ such that $E_s, \E_k$ occur. Furthermore, for the main call to have terminated, $\textsc{Resample}(E_s)$ must have terminated too. For this to happen $\textsc{Resample}(E_s)$ must have exited lines \ref{m:if} and \ref{m:recursivecall1} of its execution. During this time, only variables in sc($E_s$) were resampled and at the end, $E_s$ did not occur anymore. Thus, $E_k$ is in the neighborhood of $E_s$. But then, by line \ref{m:recwhile} of the \textsc{Resample} routine, $\textsc{Resample}(E_s)$ couldn't have terminated and thus, neither could the main call. Contradiction.

Thus $E_k=E_j$. Since under the assignment at the beginning of the main call, $E_j$ occurred, by lines \ref{m:if} and \ref{m:recursivecall1} of the \textsc{Resample} routine, it must be the case that during some resampling of the variables in sc($E_j$), $E_j$ became non-occurring. The main call could not have ended after this resampling, since $E_j$ occurs under the assignment $\beta$ produced at the end of this call. Then, there exists some $r\in \G_j$ such that \textsc{Resample}($E_r$) is the subsequent \textsc{Resample} call. Thus $E_j\in \X_r$ and we obtain a contradiction as in the case where $E_k\in \X_j$ above.\end{proof}
An immediate corollary of Lemma \ref{lem:mprogr}, is that the events of the root calls of \textsc{Resample} are pairwise distinct, therefore there can be \emph{at most} $m$ such root calls in any execution of \MAlg.

Consider now \emph{rooted forests}, i.e. forests of trees such that each tree has a special node designated as its root, whose vertices are labeled by events $E_j$, $j\in\{1,\ldots,m\}$. We will use such forests to depict the executions of \MAlg.
\begin{definition}\label{def:feasibleforest}
A labeled rooted forest $\F$ is called \emph{feasible} if: \begin{enumerate}
\item the labels of its roots are \emph{pairwise distinct}, \item the labels of any two siblings (i.e. vertices with a common parent) are distinct and \item an internal vertex labeled by $E_j$ has at most $|\G_j|+1$ children, with labels whose indices are in $\G_j\cup\{j\}$. \remove{\item an internal vertex labeled by $E_j$ has either one child labeled again by $E_j$ or at most $|\G_j|$ children, with labels whose indices are in $\G_j$. We call the former {\em idle nodes} and the latter {\em advance} nodes. Non-internal nodes (leaves) count as either idle or advance, but not both.}
\end{enumerate}\end{definition}
The number of nodes of a feasible forest $\mathcal{F}$ is denoted by $|\mathcal{F}|$.

The nodes of such a labeled forest are ordered as follows: children of the same node are ordered as their labels are; nodes in the same tree are ordered by preorder (the ordering induced by running the \emph{depth-first search} algorithm on input such a tree), respecting the ordering between siblings; finally if the label on the root of a tree $T_1$ precedes the label of the root of $T_2$, all nodes of $T_1$ precede all nodes of $T_2$.

Given an execution of \MAlg \ that lasts for \emph{at least} $n$ rounds, we construct, in a unique way, a feasible forest with $n$ nodes. First, we create one node for each \textsc{Resample} call and label it with its argument. Root calls correspond to the roots of the trees and a recursive call made from line \ref{m:recursivecall1} or \ref{m:recursivecall2} of a \textsc{Resample}($E_j$) call gives rise to a child of the corresponding node of this \textsc{Resample}($E_j$) call. We say that a feasible forest $\F$ constructed this way is the $n$-\emph{witness} forest of \MAlg's execution and we define $W_{\F}$ to be the event \MAlg \  executes producing $\F$ as an $n$-witness forest. \remove{Note that Definition \ref{def:dd} is essential to have that the children of a node labeled with an event are either only one node with the same label, or at most the number of elements in the outwards neighborhood of $E$ in the $d$-dependency graph, labeled with the corresponding neighboring events.} 

Define $P_n$ to be the probability that \MAlg \ lasts for \emph{at least} $n$ rounds. Obviously:
\begin{equation}\label{Pn}
   P_n =\Pr\Bigg[\bigcup_{\mathcal{F}: |\mathcal{F}|=n}W_\mathcal{F}\Bigg] = \sum_{\mathcal{F}: |\mathcal{F}|=n}\Pr\Big[ W_\mathcal{F}\Big],\end{equation}
where the last equality holds because the events $W_{\F}$ are disjoint.

Unfortunately,  \MAlg \ introduces various dependencies that render the probabilistic calculations essentially impossible.   For example, suppose that the $i$-th node of a witness forest $\F$ is labeled by $E_j$ and its children have labels with indices in $\G_j$. Then, under the assignment produced at the end of the $i$-th round of this execution, $E_j$ does not occur.

To avoid such dependencies, we introduce a \emph{validation algorithm}, \textsc{ValAlg}. Interestingly, \textsc{ValAlg} produces no progress towards locating the sought after assignment. However, as we will see, it has two useful properties: (i) From round to round, the distribution of the variables does not change, a fact that makes possible a direct  probabilistic analysis  and (ii) the probability that it lasts for at least $n$ steps bounds from above the respective probability of \MAlg \ (see Lemma \ref{lem:hatPn}).

\begin{algorithm}[H]
\caption{$\textsc{ValAlg.}$}\label{alg:valalg}
\vspace{0.1cm}
\begin{algorithmic}[1]
\Statex \underline{\textbf{Input:}} Feasible forest $\F$ with labels $E_{j_1},\ldots,E_{j_n}$.
\vspace{0.1cm}
\State Sample the variables $X_i$, $i=1,...,l$.\label{valalg:sample}
\For{s=1,\ldots,n}\label{valalg:for}
  \If{$E_{j_s}$ does not occur under the current assignment} \label{valalg:if}
     \State \textbf{return} {\tt failure} and exit. \label{valalg:fail}
  \Else
      \State Resample the variables in sc($E_{j_s}$)\label{valalg:resample}
  \EndIf
\EndFor \label{valalg:endfor}
\State \textbf{return} {\tt success}.\label{valalg:success}
\end{algorithmic}
\end{algorithm}

A round of \Val \ is the duration of any \textbf{for} loop executed at lines \ref{valalg:for}-\ref{valalg:endfor}. If the algorithm manages to go through its input without coming upon a non-occurring event at any given round, it returns {\tt success}. Thus, the success of  \Val \ has no consequence with respect to the occurrence, at the end,  of the undesirable events.  

The following result concerns the distribution of the random assignments at any round of \Val.
\begin{lemma}[Randomness lemma] \label{lem:randomness}
At the beginning of any given round of \Val, the distribution of the current assignment of values to the variables $X_i$, $i=1,...,l$, given that \Val \ has not failed, is as if all variables have been sampled anew.
\end{lemma} 
\begin{proof}
This follows from the fact that at each round, the variables for which their values have been exposed, are immediately resampled.
\end{proof}
Now, given a feasible forest $\F$ with $n$ nodes, we say that $\F$ is \emph{validated} by \Val \ if the latter returns {\tt success} on input $\F$. The event of this happening is denoted by $V_{\F}$. We also set:
\begin{equation}\label{eq:sumhatPn}
\hat{P}_n= \sum_{\F:\ |\F|=n} \Pr[V_\mathcal{F}].
\end{equation}
\begin{lemma} \label{lem:hatPn}
For any feasible forest $\F$, the event $W_{\F}$ implies the event $V_{\F}$, therefore $P_n \leq \hat{P}_n.$
\end{lemma}
\begin{proof}
Indeed, if the random choices made by an execution of \MAlg \ that produces as witness forest $\F$ are made by \Val \ on input $\F$ , then clearly \Val \ will return {\tt success}.
\end{proof}

From now on, we will use the following notation:  $\bn=\{n_1,\ldots,m\}$, where $n_1,\ldots,n_{m}\geq 0$ are such that $\sum_{i=1}^{m} n_i=n$ and $\bn-(1)_j:=(n_1,\ldots,n_j-1,\ldots,n_{m})$\remove{$n_1,\ldots,n_{2m}\geq 0$ are such that $\sum_{i=1}^{2m} n_i=n$ and $\bn-(1)_j:=(n_1,\ldots,n_j-1,\ldots,n_{2m})$}. We now state and prove our first result:

\begin{customthm}{\ref{thm:dalll}a}[Algorithmic directed  LLL]\label{thm:algalll}
Suppose that there exist $\x_1,\x_2,\ldots,\x_m\in(0,1)$, such that $$\Pr(E_j)\leq \x_j \prod_{i\in \G_j} (1-\x_i),$$
for all $j\in\{1,\ldots,m\}$, where $\G_j$ denotes the outwards neighborhood of $E_j$ in the $d$-dependency graph. Then, the probability that \MAlg \  %\textsc{M-Algorithm} \ref{alg:M-Alg} 
executes for \emph{at least} $n$ rounds is \emph{inverse exponential} in $n$.\remove{, ignoring polynomial in $n$ factors, \emph{at most}: \begin{equation}\label{eq:dallleq}(1-\x)^{\sum_{i=1}^{2m} n_i}=(1-\x)^n, \mbox{ with } \x=\min_{i=1,\ldots,m}\{\x_i\} <1.\end{equation}}
\end{customthm}
\begin{proof}
We may assume, without loss of generality,  that $\Pr[E_j] <  \x_j\prod_{i\in\G_j}(1-\x_i)$ for all $j\in\{1,\ldots,m\}$, i.e. that the hypothesis is given in terms of a strict inequality. Indeed, otherwise consider an event $B$, such that $B$ and $E_1,\ldots,E_m$, are mutually independent, where $\Pr[B]=1-\delta$, for arbitrary small $\delta>0$. We can now perturb the events a little, by considering e.g. $E_j\cap B$, $j=1,\ldots,m$. As a consequence, we can also assume without loss of generality that for some  other small enough $\epsilon>0$, we have that $Pr[E_j] \leq (1-\epsilon)\x_j\prod_{i\in\G_j}(1-\x_i)$.

\remove{By Lemma \ref{lem:hatPn}, it suffices to prove that $\hat{P}_n$ is bounded by the expression in Eq. \eqref{eq:dallleq}. We will give a recurrence whose solution gives $\hat{P}_n$. Let the  {\em idle} steps of \Val \ be those that correspond to idle nodes of  the input forest and as {\em advance} steps all other steps (see item (3) of Definition \ref{def:feasibleforest}).

Let  $Q_{\bn,j}$ be the probability that \Val \, is successful when started  on a {\em tree} whose root is idle and is labelled with $E_j$ and has   $\sum_{i=1}^{2m} n_i=n$ nodes   among  which we have that: (i) as many as $n_1, \ldots, n_m$ are labeled with $E_1, \ldots, E_m$, respectively, and are all idle   and (ii)  as many as $n_{m+1}, \ldots, n_{m+m}$ are labeled again with $E_1, \ldots, E_m$, respectively, and are all advance. If it is not possible to have such a tree, set $Q_{\bn,j} =0 $.

Analogously, let  $R_{\bn,j}$ be the probability that \Val \, is successful when started  on a {\em tree} whose root is advance and is labelled with $E_j$ and has   $\sum_{i=1}^{2m} n_i=n$ nodes   among  which we have that: (i) as many as $n_1, \ldots, n_m$ are labeled with $E_1, \ldots, E_m$, respectively, and are all idle   and (ii)  as many as $n_{m+1}, \ldots, n_{m+m}$ are labeled again with $E_1, \ldots, E_m$, respectively, and are all advance. If it is not possible to have such a tree, set $R_{\bn,j} =0 $.

Now, observe that to obtain a bound for $\hat{P}_n$ making use of the numbers $Q_{\bn,j}$ and $R_{\bn,j}$, we need to add over all possible forests with $n$ nodes in total, where each node of each tree can be either idle or advance. Thus, it holds that:
$$\hat{P}_n\leq\sum_{\bn}\sum_{\bn^1+\ldots+\bn^m=\bn}\Big(Q_{\bn^1,1}+R_{\bn^1,1}\Big)\cdots \Big(Q_{\bn^m,m}+R_{\bn^m,m}\Big).$$ Our aim is to show that both $Q_{\bn,j}$ and $R_{\bn,j}$ are exponentially small to $n$, for any given sequence of $\bn$. Thus, by ignoring polynomial factors, the same will hold for $\hat{P}_n$ (recall that the number of variables and the number of events are considered constants, asymptotics are in terms of the number of steps $n$ only).

Observe now that $Q_{\bn,j}, R_{\bn,j}$ are bounded from above  by functions denoted again by $Q_{\bn,j}, R_{\bn,j}$ (to avoid overloading the notation) and which follow the recurrences:

\begin{equation}\label{multqnj}
Q_{\bn,j} =  \Pr[E_j]\Big(Q_{\bn-(1)_j,j}+R_{\bn-(1)_j,j}\Big),
\end{equation}
\begin{equation} \label{multrnj}
R_{\bn,j} =   \Pr[E_j]\cdot \sum_{\bn^1+\cdots+\bn^{k_j}=\bn-(1)_{m+j}}  \Big(Q_{\bn^1,j_1}+ R_{\bn^1,j_1}\Big)\cdots \Big(Q_{\bn^{k_j},j_{k_j}}+ R_{\bn^{k_j},j_{k_j}}\Big)
\end{equation}
with initial conditions  $Q_{\bn,j}=0$ (resp. $R_{\bn,j}=0$) when $n_j=0$ (resp. $n_{m+j}=0$) and there exists an $i\neq j$ (resp. $i\neq m+j$) such that $n_i\geq 1$; and with  $Q_{\mathbf{0},j}=R_{\mathbf{0},j}=1$, where $\mathbf{0}$ is a sequence of $2m$ zeroes.

To solve the above recurrence,  we introduce, for $j=1,\ldots,m$, the 
 \emph{multivariate generating functions}: \begin{equation} \label{mult:OGF} Q_j(\bt)=\sum_{\bn:n_j\geq 1} Q_{\bn,j}\bt^{\bn} \text{ and } R_j(\bt)=\sum_{\bn:n_{m+j}\geq 1} R_{\bn,j}\bt^{\bn},\end{equation}  
where $\bt=(t_1,\ldots,t_{2m})$, $\bt^{\bn}:=t_1^{n_1}\cdots t_{2m}^{n_{2m}}$.

By multiplying both sides of \eqref{multqnj} and \eqref{multrnj} by $\bt^n$ and adding all over suitable $\bn$, we get the system of equations $(\Q,\R)$:
\begin{align}
Q_j(\bt)= & t_j f_j((\Q,\R)),\nonumber\\
R_j(\bt)= & t_{m+j} f_{m+j}((\Q,\R)),\label{system}
\end{align}
where, for $\bx=(x_1,\ldots,x_{2m})$ and $j=1\ldots,m$:
\begin{align}
f_j(\bx)= & \x_j \Bigg(\prod_{i\in \G_j}(1-\x_i)\Bigg)(x_j+x_{m+j}+2),\label{fQj}\\
f_{m+j}(\bx)= & \x_j \prod_{i\in \G_j}\Big((1-\x_i)(x_i+x_{m+i}+2)\Big).\label{fRj}
\end{align}
To solve the system, we will directly use the result of Bender and Richmond in \cite{bender1998multivariate} (Theorem $2$). Let $g$ be any $(2m)$-ary projection function on some of the $2m$ coordinates. In the sequel we take $g := pr^{2m}_s$, the $(2m)$-ary projection on the $s$-th coordinate. Let also $\Btree$ be the set of trees $B=(V(B),E(B))$ whose vertex set is $\{0,1,\ldots,2m\}$ and with edges directed towards $0$. By \cite{bender1998multivariate}, we get: \begin{equation}\label{BR}
[{\bt}^{\bn}] g((\Q,\R)(\bt))= \\ \frac{1}{\prod_{j=1}^{2m} n_j} \sum_{B \in \Btree}[\bx^{\bn-\1}] \frac{\partial(g,f_1^{n_1},\ldots, f_{2m}^{n_{2m}})}{\partial B},\end{equation}
where the term for a tree $B\in\Btree$ is defined as:
\begin{equation}\label{bendertree}
[\bx^{\bn-\1}]\prod_{r\in V(B)}\Bigg\{\Bigg(\prod_{(i,r)\in E(B)}\frac{\partial}{\partial x_i}\Bigg)f_r^{n_r}(\bx)\Bigg\},
\end{equation}
where $r\in\{0,\ldots,2m\}$ and $f_0^{n_0}:=g$.

We consider a tree $B\in\Btree$ such that \eqref{bendertree} is not equal to $0$. Thus, $(i,0)\neq E(B)$, for all $i\neq s$. On the other hand, $(s,0)\in E(B)$, lest vertex $0$ is isolated, and each vertex has out-degree \emph{exactly} one, lest a cycle is formed or connectivity is broken. From vertex $0$, we get $\frac{\partial pr_s^{2m}(\bx)}{\partial x_s}=1$. Since our aim is to prove that $\hat{P}_n$ is exponentially small in $n$, we are are interested only in factors of \eqref{bendertree} that are exponential in $n$, and we can thus ignore the derivatives (except the one for vertex $0$), as they introduce only polynomial (in $n$) factors to the product. Thus, we have that \eqref{bendertree} is equal to the coefficient of $\bx^{\bn-\1}$ in:
\begin{multline}
  \prod_{j=1}^m\Bigg\{\Bigg(\x_j^{n_j}\prod_{i\in \G_j}(1-\x_i)^{n_j}\Bigg)(x_j+x_{m+j}+2)^{n_j}\cdot\\ \Bigg(\x_j^{n_{m+j}}\prod_{i\in \G_j}(1-\x_i)^{n_{m+j}}(x_i+x_{m+i}+2)^{n_{m+j}}\Bigg)\Bigg\}.\label{tree}
\end{multline}
We will say that the first part of \eqref{tree} is the one with the factors whose exponents are $n_j$ and the second, those whose exponents are $n_{m+j}$, $j=1,\ldots,n$.

We now group the factors of each part of \eqref{tree} separately, according to the $i$'s. We have already argued each vertex $i$ has out-degree $1$.\remove{Note also that the $j$'s such that $i\in \G_j$ are exactly the $j\in \G_i$.} Thus, the exponent of the term $x_i+x_{m+i}+2$ in the first part of \eqref{tree} is $n_i$ and in the second, $\sum_{j:i\in \G_j} n_{m+j}$. Thus the product of \eqref{tree} is equal to:
\begin{multline}
\prod_{i=1}^m\Bigg\{\Bigg(\x_i^{n_i}(1-\x_i)^{\sum_{j:i\in \G_j}n_j}(x_i+x_{m+i}+2)^{n_i}\Bigg)\cdot\\ \Bigg(\x_i^{n_{m+i}}(1-\x_i)^{\sum_{j:i\in\G_j}n_{m+j}}(x_i+x_{m+i}+2)^{\sum_{j:i\in\G_j} n_{m+j}}\Bigg)\Bigg\}. \label{group}\end{multline}
Using the binomial theorem and by ignoring polynomial factors, we get that the coefficient of $\bx^{\bn-\1}$ in \eqref{group} is:
\begin{multline}\label{binomial}
\prod_{i=1}^m\Bigg(\x_i^{n_i}(1-\x_i)^{\sum_{j:i\in \G_j}n_j}\binom{n_i}{n_i}\Bigg)\cdot\\ \Bigg((1-\x_i)^{n_{m+i}}\x_i^{n_{m+i}}(1-\x_i)^{\sum_{j:i\in \G_j}n_{m+j}-n_{m+i}}\binom{\sum_{j:i\in \G_j} n_{m+j}}{n_{m+i}}\Bigg).
\end{multline}
By expanding $(\x_i+1-\x_i)^{\sum_{j:i\in \G_j}n_{m+i}}$, we get that \eqref{binomial} is at most:
\begin{multline}\label{final}
\prod_{i=1}^m\Bigg(\x_i^{n_i}(1-\x_i)^{\sum_{j:i\in \G_j}n_j}\Bigg) \Bigg((1-\x_i)^{n_{m+i}}\Bigg)< \\ \prod_{i=1}^m (1-\x_i)^{\sum_{j:i\in\G_j} n_j}(1-\x_i)^{n_{m+i}}.
\end{multline}
We now set $\x:=\min_{i=1,\ldots,m}\{\x_i\}$, and  the proof is finished.}
By Lemma \ref{lem:hatPn}, it suffices to prove that $\hat{P}_n$ is inverse exponential in $n$. Specifically, we show that $\hat{P}_n\leq (1-\epsilon)^n$. Let  $Q_{\bn,j}$ be the probability that \Val \, is successful when started  on a {\em tree} whose root is labeled with $E_j$ and has   $\sum_{i=1}^{m} n_i=n$ nodes labeled with $E_1, \ldots, E_m$. Observe that to obtain a bound for $\hat{P}_n$ we need to add over all possible forests with $n$ nodes in total. Thus, it holds that:
$$\hat{P}_n\leq\sum_{\bn}\sum_{\bn^1+\ldots+\bn^m=\bn}\Big(Q_{\bn^1,1}\cdots Q_{\bn^m,m}\Big).$$ Our aim is to show that $Q_{\bn,j}$ is exponentially small to $n$, for any given sequence of $\bn$ and any $j\in\{1,\ldots,m\}$. Thus, by ignoring polynomial in $n$ factors, the same will hold for $\hat{P}_n$ (recall that the number of variables and the number of events are considered constants, asymptotics are in terms of the number of steps $n$ only).
 
Let $\G_j^+:=\G_j\cup\{j\}$, and assume that, for each $j\in\m$, $|\G_j^+|=k_j$. Observe now that $Q_{\bn,j}$ is bounded from above  by a function, denoted again by $Q_{\bn,j}$ (to avoid overloading the notation), which follows the recurrence:
\begin{equation} \label{multqnj}
Q_{\bn,j} =   \Pr[E_j]\cdot \sum_{\bn^1+\cdots+\bn^{k_j}=\bn-(1)_j}  \Big(Q_{\bn^1,j_1}+\cdots Q_{\bn^{k_j},j_{k_j}}\Big),
\end{equation}
with initial conditions  $Q_{\bn,j}=0$ when $n_j=0$ and there exists an $i\neq j$ such that $n_i\geq 1$; and with  $Q_{\mathbf{0},j}=1$, where $\mathbf{0}$ is a sequence of $m$ zeroes.

To solve the above recurrence,  we introduce, for $j=1,\ldots,m$, the 
 \emph{multivariate generating functions}: \begin{equation} \label{mult:OGF} Q_j(\bt)=\sum_{\bn:n_j\geq 1} Q_{\bn,j}\bt^{\bn},\end{equation}  
where $\bt=(t_1,\ldots,t_{m})$, $\bt^{\bn}:=t_1^{n_1}\cdots t_{m}^{n_{m}}$.

By multiplying both sides of \eqref{multqnj} by $\bt^n$ and adding all over suitable $\bn$, we get the system of equations $\Q$:
\begin{equation}\label{system}
Q_j(\bt)= t_j f_j(\Q),
\end{equation}
where, for $\bx=(x_1,\ldots,x_{m})$ and $j=1\ldots,m$:
\begin{equation}\label{fQj}
f_j(\bx)= (1-\e)\cdot\x_j\cdot\Bigg(\prod_{i\in \G_j}(1-\x_i)\Bigg)\cdot\Bigg(\prod_{i\in\G_j^+}(x_i+1)\Bigg).
\end{equation}
To solve the system, we will directly use the result of Bender and Richmond in \cite{bender1998multivariate} (Theorem $2$). Let $g$ be any $m$-ary projection function on some of the $m$ coordinates. In the sequel we take $g := pr^{m}_s$, the $(m)$-ary projection on the $s$-th coordinate. Let also $\Btree$ be the set of trees $B=(V(B),E(B))$ whose vertex set is $\{0,1,\ldots,m\}$ and with edges directed towards $0$. By \cite{bender1998multivariate}, we get: \begin{equation}\label{BR}
[{\bt}^{\bn}] g((\Q,\R)(\bt))= \\ \frac{1}{\prod_{j=1}^{m} n_j} \sum_{B \in \Btree}[\bx^{\bn-\1}] \frac{\partial(g,f_1^{n_1},\ldots, f_{m}^{n_{m}})}{\partial B},\end{equation}
where the term for a tree $B\in\Btree$ is defined as:
\begin{equation}\label{bendertree}
[\bx^{\bn-\1}]\prod_{r\in V(B)}\Bigg\{\Bigg(\prod_{(i,r)\in E(B)}\frac{\partial}{\partial x_i}\Bigg)f_r^{n_r}(\bx)\Bigg\},
\end{equation}
where $r\in\{0,\ldots,m\}$ and $f_0^{n_0}:=g$.

We consider a tree $B\in\Btree$ such that \eqref{bendertree} is not equal to $0$. Thus, $(i,0)\neq E(B)$, for all $i\neq s$. On the other hand, $(s,0)\in E(B)$, lest vertex $0$ is isolated, and each vertex has out-degree \emph{exactly} one, lest a cycle is formed or connectivity is broken. From vertex $0$, we get $\frac{\partial pr_s^{m}(\bx)}{\partial x_s}=1$. Since our aim is to prove that $\hat{P}_n$ is exponentially small in $n$, we are are interested only in factors of \eqref{bendertree} that are exponential in $n$, and we can thus ignore the derivatives (except the one for vertex $0$), as they introduce only polynomial (in $n$) factors to the product. Thus, we have that \eqref{bendertree} is equal to the coefficient of $\bx^{\bn-\1}$ in:
\begin{equation}\label{tree}
  \prod_{j=1}^m\Bigg\{(1-\e)^{n_j}\cdot\x_j^{n_j}\cdot\Bigg(\prod_{i\in \G_j}(1-\x_i)^{n_j}\Bigg)\cdot\Bigg(\prod_{i\in\G_j^+}(x_i+1)^{n_j}\Bigg)\Bigg\}.
\end{equation}
We now group the factors of \eqref{tree} according to the $i$'s. We have already argued each vertex $i$ has out-degree $1$. Thus, the exponent of the term $x_i+1$ is $n_i+\sum_{j:i\in \G_j} n_j$ and the product of \eqref{tree} is equal to:
\begin{equation}\label{group}
  \prod_{i=1}^m\Bigg\{(1-\e)^{n_i}\cdot\x_i^{n_i}\cdot(1-\x_i)^{\sum_{j:i\in\G_j}n_j}\cdot(x_i+1)^{n_i+\sum_{j:i\in\G_j}n_j}\Bigg\}.
\end{equation}
Using the binomial theorem and by ignoring polynomial factors, we get that the coefficient of $\bx^{\bn-\1}$ in \eqref{group} is:
\begin{equation}\label{binomial}
  \prod_{i=1}^m\Bigg\{(1-\e)^{n_i}\cdot\x_i^{n_i}\cdot(1-\x_i)^{\sum_{j:i\in\G_j}n_j}\cdot\binom{n_i+\sum_{j:i\in\G_j}n_j}{n_i}\Bigg\}.
\end{equation}
By expanding $(\x_i+1-\x_i)^{n_i+\sum_{j:i\in \G_j}n_j}$, we get that \eqref{binomial} is at most:
\begin{equation}\label{final}
\prod_{i=1}^m (1-\e)^{n_i}=(1-\e)^{\sum_{i=1}^n n_i}=(1-\e)^n.
\end{equation}
Thus, $\hat{P}_n$ is inverse exponential in $n$.
\end{proof}

From Theorem \ref{thm:algalll}, the existential Theorem \ref{thm:dalll} immediately follows.

\section{Shearer's lemma}\label{sec:shearer}

We now turn our attention to Shearer's lemma. The first algorithmic proof for general probability spaces was given by Kolipaka and Szegedy \cite{kolipakaszegedy2011}. Harvey and Vondr\'ak \cite{harvey2015algorithmic} proved a version of the lemma for their lopsided association graphs (again in the generalized framework).

Here, we apply it to the underlying \emph{undirected} graph of the $d$-dependency graph we introduced in Section \ref{sec:d-depend}. Our work is situated in the variable framework and we give a forward argument that directly leads to an exponentially small bound of the probability of the algorithm lasting for at least $n$ steps.

Let $E_1,\ldots,E_m$ be events, whose vector of probabilities is $\p=(p_1,\ldots,p_m)$, that is $Pr[E_j]=p_j\in(0,1), \ j=1,\ldots,m$. Let also $G=\langle \{1,\ldots,m\},E\rangle$ be a graph on $m$ vertices, where we associate each event $E_j$ with vertex $j$, $j=1,\ldots,m$ and where $$E=\{\{i,j\}\mid\text{either }E_j\text{ is }d\text{-dependent on }E_i\text{ or }E_i\text{ is }d\text{-dependent on }E_j\}.$$ For each vertex $j$, we denote its \emph{neighborhood} in $G$ by $\G_j$, $j=1,\ldots,m$.

A subset $I\subseteq \{1,\ldots,m\}$ of the graph's vertices is an \emph{independent set} if there are no edges between its vertices. Abusing the notation, we will sometimes say that an independent set $I$ contains events (instead of indices of events). Let $I(G)$ denote the set of independent sets of $G$. For any $I\in I(G)$, let $\G(I):=\bigcup_{j\in I}\G_j$ be the set of neighbors of the vertices of $I$. Following \cite{kolipakaszegedy2011}, we say that $I$ \emph{covers} $J$ if $J\subseteq I\cup\G(I)$.

A \emph{multiset} is usually represented as a couple $(A,f)$, where $A$ is a set, called the \emph{underlying set}, and $f:A\mapsto\N_{\geq 1}$ is a function, with $f(x)$ denoting the multiplicity of $x$, for all $x\in A$. In our case, the underlying sets of multisets are always subsets of $\{1,\ldots,m\}$. Thus, to make notation easier to follow, we use couples $(I,\z)$, where $I\subseteq\{1,\ldots,m\}$ and $\z=(z_1,\ldots,z_m)$ is an $m$-ary vector, with $z_j\in\N$ denoting the multiplicity of $E_j$ in $I$. Note that $z_j=0$ if and only if $j\notin I$, $j=1,\ldots,m$.

Consider our second main theorem below, which is a variation of Shearer's Lemma for $d$-dependency graphs.

\begin{theorem}[Shearer's lemma for $d$-dependency graphs]\label{thm:shearer} If for all $I\in I(G)$: \begin{equation}\label{lopshearer}\tag{\sf{Shear}}q_I(G,\bar{p})=\sum_{J\in I(G): \ I\subseteq J} (-1)^{|J\setminus I|}\prod_{j\in J} p_j> 0,\end{equation}
then $$Pr\Bigg[\bigwedge_{j=1}^m\bar{E}_j\Bigg]>0.$$
\end{theorem}
Actually, we prove below an algorithmic version  (Theorem \ref{thm:algshearer}) of the existential Theorem~\ref{thm:shearer}, where we  give exponentially small estimates of the probability of the algorithm not producing the desired results.

The algorithm we use is a variation of the \textsc{Maximal Set Resample} algorithm, designed by Harvey and Vondr\'ak in \cite{harvey2015algorithmic}, which is a slowed down version of the algorithm in \cite{kolipakaszegedy2011}. The algorithm constructs multisets whose underlying sets are independent sets of $G$, by selecting occurring events that it resamples until they do not occur anymore.

\begin{algorithm}[h]
\caption{$\textsc{MaxSetRes.}$}\label{alg:maxsetres}
\vspace{0.1cm}
\begin{algorithmic}[1]
\State Sample the variables $X_i$, $i=1,...,l$ and let $\alpha$ be the resulting assignment.\label{msr:sample}
\State $t:=1$, $I_t:=\emptyset$, $\z^t:=(0,\ldots,0)$. \label{msr:initial}
\Repeat \label{msr:repeat}
\While{there exists an event $E_j\notin I_t\cup\Gamma(I_t)$ that occurs under the current assignment, \Statex \hspace{0.8cm} let $E_j$ be the least indexed such event and}\label{msr:main-while}
 \State $I_t:=I_t\cup\{j\}$, $c:=0$, $z^t_j:=1$.
 \While  {$E_j$  occurs }\label{msr:inner-while}
  \State Resample the variables in sc($E_j$).\label{msr:idle}
  \State $c:=c+1$.
 \EndWhile . \label{msr:inner-endwhile}
 \If{there exists an occurring event that is not in $I_t\cup\Gamma(I_t)$}\label{msr:if}
 \State $z_j^t:=c+1$.
 \ElsIf{there exists an occurring event not in $\Gamma_j$}\label{msr:elseif}
 \State $t:=t+1$, $I_t:=\emptyset$.\label{msr:eventind}
 \State $z_j^t:=c$.
 \Else
 \For{$s=1,\ldots,c$}\label{msr:idlephases} 
 \State $I_{t+s}:=\{E_j\}$.
 \EndFor
 \State $t:=t+c+1$, $I_t:=\emptyset$.\label{msr:re-initialize}
\EndIf \label{msr:endif}
 \EndWhile
 \Until $I_t=\emptyset$. \label{msr:until}
\State  Output current assignment $\alpha$.\label{msr:output}
\end{algorithmic}
\end{algorithm}

A \emph{step} of \MSR \ is a single resampling of the variables of an event in line~\ref{msr:idle}, whereas a \emph{phase} is an iteration of \textbf{repeat} at lines \ref{msr:repeat}--\ref{msr:until}, except from the last iteration that starts and ends with $I_t= \emptyset$. Phases are not nested. During each phase, there are at most $m$ repetitions of the \textbf{while}--loop of lines \ref{msr:inner-while}--\ref{msr:inner-endwhile}, where $m$ is the number of events (recall that the number of variables $l$, and the number of events $m$ are considered to be constants). During each phase, a multiset $(I_t,z^t)$ is created, where the underlying set $I_t$ is an independent set of $G$. There are two scenarios that can happen at the end of a phase. The first is by line \ref{msr:eventind}, where \MSR \ creates a new independent set, containing $c$ copies of the last event it resampled at lines~\ref{msr:inner-while}--\ref{msr:inner-endwhile}. The second is by lines~\ref{msr:idlephases}--\ref{msr:re-initialize}, where \MSR \ proceeds $c$ phases at once, creating $c$ singleton sets, containing only the event it lastly resampled at lines~\ref{msr:inner-while}--\ref{msr:inner-endwhile}. Lines~\ref{msr:if}--\ref{msr:endif} exist for technical reasons that will become apparent later. 

 Note that, by lines \ref{msr:main-while} and \ref{msr:until}, if and when \MSR \ terminates, it produces an assignment of values under which none of the events occurs.

We now proceed with the first lemma concerning the execution of \MSR. Note that it refers to the underlying independent sets of the multisets created at each phase.

\begin{lemma}\label{lem:msrstable}
$I_{t}$ \emph{covers} $I_{{t+1}}$, for all $t\in \{1,\ldots,n-1\}$.
\end{lemma}
\begin{proof}
Let $E_j$ be an event in $I_{{t+1}}$. Then, at some point during phase $t+1$, $E_j$ was occurring. We will prove below  that $E_j$ occurs also at the beginning of phase $t+1$. This will conclude the proof, since if  $E_j \not \in \G(I_t) \cup I_t$, then  at the moment when  phase $I_{t+1}$ was to start, the algorithm instead of starting $I_{t+1}$ would opt to add  $E_j$ to $I_t$, a contradiction.  

Assume that $I_{t+1}\neq\{E_j\}$, lest we have nothing to prove. To prove that $E_j$ occurs at the beginning of phase $t+1$, assume towards a contradiction that it does not. Then it must have become occurring during the repeated resamplings of an event $E_r$ introduced into $I_{t+1}$.

Therefore, under the assignment when $E_r$ was selected, $E_r$ occurred and $E_j$ did not. Furthermore, during the repeated resamplings of $E_r$, only variables in sc($E_r$) had their values changed, and under the assignment at the end of these resamplings, $E_r$ ceases occurring and $E_j$ occurs.  By Definition \ref{def:dd}, $E_j$ is $d$-dependent on $E_r$ and thus $E_j\in\Gamma(I_{t+1})$. By lines \ref{msr:main-while}, \ref{msr:if} and \ref{msr:elseif}, $E_j$ could not have been selected at any point during round $t+1$. 
This concludes the proof.
\end{proof}
Consider the following definition:

\begin{definition}[Kolipaka and Szegedy \cite{kolipakaszegedy2011}]
A {\em stable} sequence of events is a sequence of non-empty independent sets  $\mathcal{I} = I_{1},\ldots,I_{n}$ such that  $I_{t}$ covers $I_{{t+1}}$, for all $t\in\{1,\ldots,n-1\}$.\end{definition}
Stable sequences play the role of witness structures in the present framework. By Lemma \ref{lem:msrstable}, the underlying sets of the sequence of multisets \MSR \ produces in an execution that lasts for at least $n$ phases, is a stable sequence of length $n$.

We now prove:
\begin{customthm}{\ref{thm:shearer}a}[Algorithmic Shearer's lemma for $d$-dependency graph]\label{thm:algshearer} If for \newline all $I\in I(G)$: $$q_I(G,\bar{p})=\sum_{J\in I(G): \ I\subseteq J} (-1)^{|J\setminus I|}\prod_{j\in J} p_j> 0,$$  then the probability $\rP_n$ that \MSR \ lasts for at least $n$ phases is exponentially small, i.e.  for some constant $c<1$,  $\rP_n$  is at most $c^n$, ignoring polynomial factors. 
\end{customthm}

Again, Theorem \ref{thm:shearer} follows immediately from Theorem \ref{thm:algshearer}.
\begin{proof}
Let $\bz=(\z^1,\ldots,\z^n)$ be an $n$-ary vector, whose elements $\z^t=(z_1^t,\ldots,z_m^t)$ are $m$-ary vectors of non-negative integers, $t=1,\ldots,n$. Let also $$(\I,\bz)=(I_1,\z^1),\ldots,(I_n,\z^n)$$ be a sequence of multisets, whose underlying sequence $\I$ is a stable sequence. We denote by $|(\I,\bz)|$ its length, i.e. the number of pairs $(\I_t,\z^t)$ it contains. If $\rP(\I,\bz)$ is the probability that an execution of \MSR \ produced $(\I,\bz)$ (which can be zero), it is easy to see that:
\begin{equation}\label{multiset}\rP_n = \sum_{(\I,\bz): |(\I,\bz)| =n} \rP(\I,\bz),\end{equation} where the sum is over all possible pairs of stable sequences $\I$ of length $n$ and vectors $\bz$.

To bound the rhs of \eqref{multiset}, consider the validation algorithm \MSV \ below. \MSV, on input a stable sequence $\mathcal{I}=I_1,\ldots,I_n$, proceeds to check each event contained in each independent set. If this event does not occur, it fails; else it resamples the variables in its scope. Note that the success or failure of this algorithm has nothing to do with finding an assignment such that none of the events occur.

\begin{algorithm}[h]
\caption{$\textsc{MaxSetVal.}$}\label{alg:maxsetval}
\vspace{0.1cm}
\begin{algorithmic}[1]
\Statex \underline{\textbf{Input:}} Stable sequence $\mathcal{I} =I_{1},\ldots,I_{n}$.
\vspace{0.1cm}
\State Sample the variables $X_i$, $i=1,...,l$.
\For{t=1,\ldots,n}\label{msv:for}
   \For{each event $E_j$  of $I_{t}$ }
  \If{$E_j$ does not occur under the current assignment} 
     \State \textbf{return} {\tt failure} and exit.
  \Else
      \State Resample the variables in $\rm{sc}(E_j)$
  \EndIf
  \EndFor
\EndFor \label{msv:end}
\State \textbf{return} {\tt success}.
\end{algorithmic}
\end{algorithm}
A phase of \MSV \ is any repetition of lines \ref{msv:for}--\ref{msv:end}. Let $\hat{\rP}(\mathcal{I})$ be the probability that  \MSV \ is successful on input $\mathcal{I}$ and:  \begin{equation}\label{msvpn}\hat{\rP}_n := \sum_{\mathcal{I}:|\mathcal{I}| =n } \hat{\rP}(\mathcal{I}).\end{equation} 
To obtain our result, we now proceed to show: (i) that $\rP_n\leq\hat{\rP}_n$ and (ii) that $\hP_n$ is inverse exponential to $n$. For the former, consider the validation algorithm \MV, algorithm \ref{alg:multisetval}, below.

\begin{algorithm}[ht]
\caption{$\textsc{MultiSetVal.}$}\label{alg:multisetval}
\vspace{0.1cm}
\begin{algorithmic}[1]
\Statex \underline{\textbf{Input:}} $(\I,\bz)=(I_1,z^1),\ldots,(I_n,z^n)$, $I_t=\{E_{t_1},\ldots,E_{t_{k_t}}\}$, $t=1,\ldots,n$.
\vspace{0.1cm}
\State Sample the variables $X_i$, $i=1,...,l$.
\For{$t=1,\ldots,n$}\label{multi:for}
\For{$s=1,\ldots,k_t-1$}
\For{$r=1,\ldots,z^t_{t_s}$}
  \If{$E_{t_s}$ does not occur under the current assignment}
     \State \textbf{return} {\tt failure} and exit.
  \Else
      \State Resample the variables in sc($E_{t_s}$)
  \EndIf
  \EndFor
  \If{$E_{t_s}$ occurs under the current assignment}\label{multi:occur}
     \State \textbf{return} {\tt failure} and exit.
     \EndIf
  \EndFor
  \If{$E_{t_{k_t}}$ does not occur under the current assignment}
     \State \textbf{return} {\tt failure} and exit.
  \Else
      \State Resample the variables in sc($E_{t_{k_t}}$)
  \EndIf
\EndFor \label{multi:end}
\State \textbf{return} {\tt success}.
\end{algorithmic}
\end{algorithm}

\MV \ takes as input a sequence $(\I,\bz)$ of multisets whose underlying sequence is stable. It then proceeds, for each multiset, to check if its events occur under the current assignment it produces. If not it fails, else it proceeds. When the last copy of an event inside a multiset, apart from the last event, is resampled, it checks if that event still occurs (line \ref{multi:occur}). If it does, the algorithm fails. If it manages to go through the whole sequence without failing, it succeeds. Note again that the success or failure of \MV \ has nothing to do with obtaining an assignment such that none of the events holds. 

We call a \emph{phase} of \MV \ each repetition of lines~\ref{multi:for}--\ref{multi:end}. Let also $\tP(\I,\bz)$ be the probability that \MV \ succeeds on input $(\I,\bz)$. We prove two lemmas concerning \MV.
\begin{lemma}\label{lem:msr-multi}
For each sequence $(\I,\bz)$, $\rP(\I,\bz)\leq \tP(\I,\bz)$. Thus:\begin{equation}\label{msr-multi}
    \rP_n \leq \sum_{(\I,\bz): |(\I,\bz)| =n} \tP(\I,\bz)
\end{equation}
\end{lemma}
\begin{proof}
It suffices to prove the first inequality, as the result is then derived by \eqref{multiset}. Note that the last event in every multiset that \MSR \ produces always has multiplicity $1$ and that, furthermore, it is not required to be non-occurring after resamping it, in contrast with all the other events in the multiset. It is now straightforward to notice that if \MV \ makes the same random choices as \MSR \ did when it created any sequence $(\I,\bz)$, \MV \ will succeed on input $(\I,\bz)$.
\end{proof}
\begin{lemma}\label{lem:mv-msv}
For any $(\I,\bz)$, it holds that:\begin{equation}\label{mv-msv}
    \sum_{(\I,\bz): |(\I,\bz)| =n} \tP(\I,\bz)= \sum_{\I:|\I|=n}\hP(\I).
\end{equation}
\end{lemma}
\begin{proof}
We will rearrange the sum in the lhs of \eqref{mv-msv}. Assume that the stable sequences of length $n$ in $G$ are arbitrarily ordered as $\I_1,\ldots,\I_s$. Then, it holds that:\begin{equation}\label{terms}
\sum_{(\I,\bz):|(\I,\bz)|=n}\tP(\I,\bz)=\sum_{\bz=(\z^1,\ldots,\z^n)}\tP(\I_1,\bz)+\cdots+\sum_{\bz=(\z^1,\ldots,\z^n)}\tP(\I_s,\bz).    
\end{equation}
Let $\I=(I_1,\ldots,I_n)$ be a stable sequence and consider the term $$\sum_{\bz=(\z^1,\ldots,\z^n)}\tP(\I,\bz)$$ of \eqref{terms} corresponding to $\I$. It suffices to show that is is equal to $\hP(\I)$.

Assume again that $I_t=\{E_{t_1},\ldots,E_{t_{k_t}}\}$ and that $\z^t=(z^t_1,\ldots,z^t_m)$, where $z^t_j\geq 0$, $j=1,\ldots,m$, $t=1,\ldots,n$. Finally, set: $$\tP(I_t,{\z^t}):=\Pr[E_{t_1}]^{z^t_1}\Pr[\E_{t_1}\cap E_{t_2}]\Pr[E_2]^{z^t_2-1}\cdots\Pr[E_{t_{k_t}-1}]^{z^t_{k_t-1}}\Pr[\E_{t_{k_t}-1}\cap E_{t_{k_t}}].$$
Then, it holds that:
\begin{equation}\label{sum-prod}
    \sum_{\bz=(\z^1,\ldots,\z^n)}\tP(\I,\bz) =\sum_{\bz=(z^1,\ldots,z^n)}\prod_{t=1}^n\tP(I_t,z^t).
\end{equation}

By Lemma \ref{lem:genlops}, it holds that all the factors $\Pr[\E\cap E']$ that appear in \eqref{sum-prod} are less or equal than $\Pr[\E]\cdot\Pr[E']$. Now, by factoring out:$$\hP(\I)=\prod_{t=1}^n\Bigg(\Pr[E_{t_1}]\cdots\Pr[E_{t_{k_t}}]\Bigg)$$ from the rhs of \eqref{sum-prod} and by rearranging the terms according to the sets $\I_t$, we get:
\begin{multline}\label{hpI}
    \sum_{\bz=(\z^1,\ldots,\z^n)}\tP(\I,\bz)=\hP(\I)\cdot\prod_{t=1}^n\Bigg(\sum_{z^t=(z^t_1,\ldots,z_{k_t}^t)}\Pr[E_{t_1}]^{z^t_1-1}(1-\Pr[E_{t_1}])\cdots\\\Pr[E_{t_{k_t}-1}]^{z^t_{k_t-1}-1}(1-\Pr[E_{t_{k_t-1}}])\Bigg).
\end{multline}
The proof is now complete, by noticing that all the factors, except from $\hP(\I)$ in the rhs of \eqref{hpI} are equal to $1$.
\end{proof}
Thus, by \eqref{msr-multi}, \eqref{mv-msv} and \eqref{msvpn}, we get:\begin{equation}\label{msr-msv}
    \rP_n\leq\hP_n.
\end{equation}
Thus, what remains is to show that $\hP_n$ is inverse exponential to $n$. Towards this,  for $n\geq 1$ let
\begin{equation}\hP_{n, I} =  \sum_{\stackrel{\mathcal{I}: |\mathcal{I}| =n}{\mathcal{I}_1 =I }}\hP(\mathcal{I}),\end{equation}
where  $\mathcal{I}_1$ is its first term of  $\mathcal{I}$.

Observe now that, for any independent set $I$, $\hP_{1, I} = \prod_{j \in I} p_j$. Thus we obtain the following recursion:

\begin{equation} \label{recursion} 
\hP_{n+1,I}=
\begin{cases} 
\prod_{j \in I} p_j \left(\sum_{J: I \mbox{ covers } J } {\hP}_{n,J}\right) & \mbox{ if } n \geq  1, \\  
\prod_{j \in I}p_j & \mbox{ if } n=0. 
\end{cases}
\end{equation}
If the class of all non-empty independent sets is $\{I_1, \ldots, I_s\}$, following again the terminology of \cite{kolipakaszegedy2011}, we define the \emph{stable set matrix} $M$, as an $s\times s$ matrix, whose element in the $i$-th row and $j$-th column is $\prod_{j\in I}p_j$ if $I$ covers $J$ and $0$ otherwise. Furthermore, let $q_n=(\hP_{n,I_1},\ldots,\hP_{n,I_s})$. Easily, \eqref{recursion} is equivalent to: $$q_n=Mq_{n-1},$$ thus
\begin{equation}\label{recfinal}
q_n=M^{n-1} q_1. 
\end{equation}

\remove{{\color{red}\bf \large  Pls: (1) Adjust the notation form here and on (you should just end with the white box that signifies end of proof) --omit the computations about the thresholds, use the terminology of the final theorem as I wrote it above.  (2) Check everything very carefully and please send the final by tomorrow Tuesday morning 8 AM. I will not look at it.}}

Let $\|\cdot\|_1$ be the $1$-\emph{norm} defined on $\mathbb{R}^s$. It is known that any vector norm, and thus $1$-norm too, yields a norm for square matrices called the \emph{induced norm} \cite{horn1990matrix} as follows:
\begin{equation}\label{induced}
\|M\|_1:=\sup_{x\neq 0} \frac{\|Mx\|_1}{\|x\|_1}\geq\frac{\|Mq_1\|_1}{\|q_1\|_1}.
\end{equation}
By \eqref{recfinal} and \eqref{induced}, we have that:
\begin{equation}\label{norm}
    \|q_n\|_1=\|M^{n-1}q_1\|_1\leq \|M^{n-1}\|_1\cdot\|q_1\|_1.
\end{equation}
Note now that:
\begin{equation}\label{boundpn}
    \hP_n\leq\sum_{i=1}^s \hP_{n,I_i}=\|q_n\|_1=\|M^{n-1}\|_1\|q_1\|_1.
\end{equation}
Since $\|q_1\|_1$ is a constant, it suffices to show that $\|M^{n-1}\|_1$ is exponentially small in $n$. Let $\rho(M)$ be the \emph{spectral radius} of $M$ \cite{horn1990matrix}, that is: $$\rho(A):=\max\{|\lambda|\mid\lambda\text{ is an eigenvalue of } A\}.$$ By Gelfand's formula (see again \cite{horn1990matrix}) used for the induced matrix norm $\|\cdot\|_1$, we have that:
\begin{equation}\label{gelfand}
\rho(M)=\lim_{n\rightarrow\infty}\|M^n\|_1^{1/n}.
\end{equation}
Furthermore, in \cite{kolipakaszegedy2011} (Theorem $14$), it is proved that the following are \emph{equivalent}:
\begin{enumerate}
    \item For all $I\in I(G): \ q_I(G,\bar{p})> 0.$
    \item $\rho(M)<1$. 
\end{enumerate}
Using $(1\Rightarrow 2)$ we can select an $\epsilon>0$ such that $\rho(M)+\epsilon<1$. Then, by \eqref{gelfand}, we have that there exists a $n_0$ (depending only on $\epsilon, M$) such that, for $n\geq n_0$: $\|M^{n-1}\|_1 \leq (\rho(M)+\epsilon)^{n-1}$, which, together with \eqref{boundpn}, gives us that $\hP_n$ is exponentially small in $n$.

Thus, by the analysis above, we get that there is a constant $c<1$ (depending on $\|q_1\|, p$ and $\rho(M)+\epsilon$) such that $\rP_n\leq c^n$, for $n\geq n_0$ and by ignoring polynomial factors. This concludes the proof.

\end{proof}

\section*{Acknowledgment} We are truly grateful to Ioannis Giotis and Dimitrios Thilikos for their substantial contribution to earlier versions of this work (see \cite{Giotis2018AlternativePO} and \cite{DBLP:conf/colognetwente/GiotisKPT15}).

%\bibliography{lllgeneral}
%\bibliographystyle{plain}

\end{document}